\documentclass{article}
\usepackage{amscd,amsfonts,euscript,amsmath,amsxtra,amssymb,amsthm}
\usepackage[english]{babel}
\newtheorem{theorem}{Theorem}
\newtheorem{lemma}{Lemma}

\newtheorem{proposition}{Proposition}

\theoremstyle{definition} \newtheorem{example}{Example}

\theoremstyle{remark}

\textwidth13.5cm \numberwithin{equation}{section}

\usepackage[all]{xy}
\def\A{{{\mathbb A}}}

\def\Q{{{\mathbb Q}}}

\def\P{{{\mathbb P }}}

\def\LL{{{\mathcal L}}}

\def\OO{{{\mathcal O}}}

\def\ZZ{{{\mathcal Z}}}

\def\Hom{{{\rm Hom }}}

\def\Spec{{{\rm Spec \,}}}

\def\Hilb{{{\rm Hilb \,}}}
\def\HHilb{{{\mathfrak{Hilb} \,}}}

\def\Supp{{{\rm Supp \,}}}

\def\Nil{{{\rm Nil \,}}}
\def\dim{{{\rm dim \,}}}

\def\ker{{{\rm ker \,}}}

\def\red{{{\rm red }}}

\begin{document}
{\sl MSC 14C05, 14B10}

 {\sl UDC 512.7}
\medskip

\begin{center}
{\Large\sc Elementary proof of reducedness of Hilbert schemes of
points in higher dimensions }\end{center}
\medskip
\begin{center}
Nadezda V. TIMOFEEVA

\smallskip

Yaroslavl' State University

Sovetskaya str. 14, 150000 Yaroslavl', Russia

e-mail: {\it ntimofeeva@list.ru}
\end{center}
\bigskip

\begin{quote}
The criterion for an affine primary algebra over the field to be
integral, is proven. Using this criterion we give a simple proof
that Hilbert scheme of 0-dimensional subschemes of length  $l$ of
nonsingular $d$-dimensional algebraic variety is reduced for all
$d$ and $l$.

Bibliography: 10 items.
\end{quote}


\markboth{Н.\,В.~Тимофеева}{Reducedness of Hilbert schemes of
points in higher dimensions}

\section{Introduction}
\label{subsec1} In the present article we work with algebraic
schemes of finite type over a field $k$ of zero characteristic.
This field is assumed to be algebraically closed. A variety is a
reduced Noetherian separated scheme of finite type over an
algebraically closed field. A curve is a variety understood as
described if it has dimension 1; a surface is a variety of
dimension 2. Hilbert scheme $\Hilb P$ whose points correspond to
closed subschemes of the scheme $P$ is a convenient and oftenly
used tool in algebro-geometric constructions as a simplest version
of (fine) moduli space. Following \cite{Hart1} we recall its
definition.

Let $f: X\to S$ be a morphism of schemes. For any $S$-scheme $S'
\to S$ set
\begin{equation*}
\HHilb_{X/S}(S')=\{S'\mbox{\rm -flat closed subschemes } \ZZ
\subset X\times_S S' \}
\end{equation*}
If $S''$ is any other  $S$-scheme and $S'' \to S'$ is $S$-morphism
than the map $\HHilb_{X/S}(S') \to \HHilb_{X/S}(S'')$ is defined
by the correspondence  $\ZZ \mapsto \ZZ \times _{S'} S''$. By this
means the contravariant functor $\HHilb_{X/S}$ is defined from the
category of schemes to the category of sets.

Let  $f:X\to S$ be a projective morphism, $\LL$ invertible
$\OO_X$-sheaf which is ample relative to $S$, $S$ is locally
Noetherian,  and $p(t)\in \Q[t]$ is a polynomial with rational
coefficients. Define $\HHilb^{p(t)}_{X/S}(S')$ as a set of
$S'$-flat subschemes $\ZZ$, such that the fibre  $Z_s=\ZZ \times
_{S'}s$ over each closed point  $s\in S'$ has Hilbert polynomial
equal to $p(t)$, i.e. $\chi((\OO_{\ZZ} \otimes
\LL^t)|_{f^{-1}(t)})=p(t)$. The symbol $\chi(\cdot)$ denotes the
(sheaf) Euler -- Poincar\`{e} characteristic. This defines a
subfunctor $\HHilb^{p(t)}_{X/S} \subset \HHilb_{X/S}$ and a
decomposition $\HHilb_{X/S}=\coprod _{p(t) \in \Q [t]}
\HHilb^{p(t)}_{X/S}.$ Now cite the classical A.~Grothendieck's
theorem.
\begin{theorem} \cite[theorem 3.2]{FGA} Let $X \to S$ be a
projective morphism of schemes and  $S$ be Noetherian. Then for
any polynomial $p(t) \in \Q[t]$ the functor $\HHilb^{p(t)}_{X/S}$
is representable by a scheme $\Hilb^{p(t)}(X/S)$ which is
projective over $S$. Hence the functor $\HHilb_{X/S}$ is also
representable, and its representing scheme is a disjoint union of
schemes  $\Hilb^{p(t)}(X/S).$
\end{theorem}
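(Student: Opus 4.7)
\emph{Proof sketch.} The strategy is to realize $\HHilb^{p(t)}_{X/S}$ as a closed subfunctor of a Grassmannian functor, which is representable by a projective $S$-scheme via the Pl\"ucker embedding. First I would reduce to the case $X = \P^N_S$. Since $f\colon X\to S$ is projective and $\LL$ is $f$-ample, after replacing $\LL$ by a sufficiently high tensor power one may assume $\LL = i^*\OO_{\P^N_S}(1)$ for a closed $S$-immersion $i\colon X\hookrightarrow \P^N_S$. An $S'$-flat family $\ZZ\subset X\times_S S'$ is then the same datum as an $S'$-flat family $\ZZ\subset \P^N_{S'}$ whose ideal contains that of $X\times_S S'$, and the latter containment cuts out a closed subfunctor; so it suffices to treat $X = \P^N_S$.

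The key technical input, and in my view the main obstacle of the proof, is uniform Castelnuovo--Mumford regularity: there exists an integer $m_0 = m_0(p,N)$ such that for every field $\kappa$ and every closed $Z\subset \P^N_\kappa$ with Hilbert polynomial $p$, the ideal sheaf $\II_Z$ is $m_0$-regular. Mumford establishes this bound by induction on $N$ via the cohomology sequence associated to a generic hyperplane section, combined with a careful estimation of the growth of the coefficients of $p$. Granted the bound, for any $S'$-flat family $\ZZ\subset \P^N_{S'}$ with fibrewise Hilbert polynomial $p$ and any $m\geq m_0$, cohomology-and-base-change shows that $\pi_*\II_\ZZ(m)$ is locally free of fixed rank $r := \binom{N+m}{N} - p(m)$, commutes with arbitrary base change, and globally generates $\II_\ZZ(m)$ on each fibre (here $\pi$ denotes the appropriate projective-bundle structure map). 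Fixing such an $m$, the assignment $\ZZ\mapsto \bigl(\pi_*\II_\ZZ(m)\subset \pi_*\OO_{\P^N_{S'}}(m)\bigr)$ defines a natural transformation
\begin{equation*}
\HHilb^{p(t)}_{\P^N_S/S} \longrightarrow \underline{\Grass}_S\bigl(r,\ \pi_*\OO_{\P^N_S}(m)\bigr).
\end{equation*}

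To finish, one checks that this transformation is a closed immersion of functors: a rank-$r$ subbundle $\FF\subset \pi_*\OO(m)$ yields a candidate ideal sheaf as the image of the adjoint map $\pi^*\FF\otimes \OO(-m)\to \OO_{\P^N_{S'}}$, and one imposes closed conditions requiring the resulting quotient to be $S'$-flat with fibrewise Hilbert polynomial equal to $p$ (via a flattening-stratification argument together with upper-semicontinuity of Euler characteristics applied to further twists). This realises $\Hilb^{p(t)}(\P^N_S/S)$ as a closed subscheme of a projective $S$-scheme, hence as a projective $S$-scheme itself; combined with the reduction of the first paragraph this gives representability of $\HHilb^{p(t)}_{X/S}$ by a projective $S$-scheme for arbitrary $X$. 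Finally, the decomposition $\HHilb_{X/S} = \coprod_{p\in\Q[t]}\HHilb^{p(t)}_{X/S}$ follows at once from the local constancy of the Hilbert polynomial in any flat family.
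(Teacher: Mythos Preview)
Your sketch is the standard Grothendieck--Mumford argument and is correct in outline: reduce to $\P^N_S$, invoke uniform Castelnuovo--Mumford regularity to get a well-defined map to a Grassmannian, and then cut out the Hilbert scheme as a closed subscheme via flattening stratification. One small remark: the reduction in your first paragraph requires knowing that ``$\ZZ\subset X\times_S S'$'' is a closed condition inside $\Hilb^{p(t)}(\P^N_S/S)$, which is usually phrased as the Hilbert scheme of a closed subscheme being a closed subscheme of the ambient Hilbert scheme; this is itself a consequence of flattening stratification, so strictly speaking you use that ingredient twice.

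However, there is nothing to compare against: the paper does not prove this theorem at all. It is quoted verbatim as \cite[theorem 3.2]{FGA} and used purely as background input for the discussion of Hilbert schemes of points. The paper's own contributions begin only with the irreducibility argument in Section~2 and the reducedness criterion (Proposition~\ref{propdim}). So your proposal is not an alternative to anything in the paper --- it is simply a (correct) outline of Grothendieck's classical proof, which the author takes for granted.
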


The scheme  $\Hilb^{p(t)}(X/S)$ is referred to as the {\it Hilbert
scheme of the scheme  $X$ over $S$ with Hilbert polynomial
$p(t)$.}



For our purposes we restrict ourselves by an "absolute" case when
 $S=\Spec k$.

In general  $\Hilb^{p(t)}P$ can have rather complicated structure.
In particular it can be nonreduced and can consist of several
connected components. This depends of the polynomial  $p(t)$ and
of the structure of the scheme $P$. The simplest case is a
constant Hilbert polynomial  $p(t)=l=h^0(\OO_Z)$. The natural
number  $l$ is called usually a {\it length} of zero-dimensional
subscheme $Z$. If  $Z$ is such that  $\OO_Z \cong
\bigoplus_{i=1}^l k_{x_i}$ where  $k_{x_i}$ is a skyscraper sheaf
whose nonzero fibre in the point $x_i$ is isomorphic to its
residue field (and, by algebraic closedness of the filed $k$, to
the filed $k$ itself), then the length of the subscheme $Z$ equals
 the number of points. In general case the sheaf  $\OO_Z$ is
isomorphic to the direct sum $\bigoplus_{i=1}^s A_i$ of Artinian
local $k$-algebras $A_i$ and $l= \sum_{i=1}^s \dim_k A_i$.

In the case when $P$ is a surface schemes ${\Hilb^l P}$ have been
studied in details during  70ies -- 90ies of the past century
(J.~Brian\c{c}on, J.~Fogarty \cite{Fogart}, R.~Hartshorne,
A.~Iarrobino \cite{Iarrobino}). If $P$ is nonsingular surface then
 $\Hilb^l P$ is nonsingular projective variety of dimension $2l$ --
 cf. \cite{Fogart}. Also in \cite{Fogart} it is proven that if
 $P$ is 3-dimensional variety then the scheme  $\Hilb^4 P$ contains
 singular points. Consequently all schemes $\Hilb^l P$ for $l\ge 4$
are singular. Also the conjecture that  $\Hilb^n \P^N$ are reduced
and irreducible for all $n$ and $N$ is formulated. There is a
classical result of R.~Hartshorne: if $P$ is relative projective
space over the connected scheme $S$ then Hilbert schemes
$\Hilb^{p(t)} P$ are connected provided they are nonempty (cf.
\cite{Hart1}). This result was reproven in the different way in
1996 by K.~Pardue \cite{Pardue} for usual ("absolute") projective
space over a field and for a projective space in zero
characteristic in 2004 by I.~Peeva and M.~Stillman
\cite{PeeStill}.  Let  $P$ be a nonsingular irreducible variety of
dimension  $d$. By the best knowledge of the author it is not
known till now about reducedness of schemes  $\Hilb^l P$  for
$l\ge 4$, $d\ge 3$.

Take a closed point   of the scheme  $\Hilb^l P$. Let it
correspond to the closed subscheme $i:Z\hookrightarrow P$. Denote
by $I_Z$ the sheaf of ideals  $\ker (\OO_P \to i_{\ast} \OO_Z)$.
The Zariski tangent space to the  scheme $\Hilb^l P$ at the point
$Z$ is isomorphic to the  $k$-vector space  $\Hom_{\OO_Z} (I_Z,
\OO_P/I_Z)$:
\begin{equation} T_Z \Hilb^l P \cong \Hom_{\OO_Z} (I_Z, \OO_P/I_Z).
\label{tanspace}
\end{equation}
Its dimension equals $ld$ if the point $Z$ of the scheme $\Hilb^l
P$ is general enough and corresponds to a subscheme $Z$ which
consists of $l$ distinct reduced points. A closed point
$i:Z\hookrightarrow P$ of the scheme $\Hilb^l P$  is called
singular if $\dim T_z
> ld$.

Questions on the dimension of the tangent space and on reducedness
of the scheme are local. Then we can replace an arbitrary
nonsingular $d$-dimensional variety by local neighborhood of its
closed point. This replacement and the introduction of coordinates
(in usual sense) are based on the result we recall following
\cite{Mats}.

Let $(A, \mathfrak m, K)$ be a local ring with maximal ideal $\mathfrak m$ and
residue field $K=A/\mathfrak m$. 
We need particular case of the theorem \cite[Theorem 29.7]{Mats}.
\begin{theorem} A complete regular local ring of characteristic 0
is  formal power series a ring over a field.
\end{theorem}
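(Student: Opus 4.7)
The plan is to reduce to Cohen's structure theorem for complete local rings of equal characteristic, and then explicitly identify $A$ with a formal power series ring by mapping an abstract variable to each member of a regular system of parameters. Set $n=\dim A$.

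First I would produce a coefficient field, i.e.\ a subfield $k\subset A$ mapping isomorphically to $K$ under $A\twoheadrightarrow K$. Since $A$ has characteristic $0$, the prime ring $\mathbb{Z}\subset A$ injects into $K$, so $A$ is of equal characteristic. A standard Zorn's-lemma argument then yields a maximal subfield $k\subset A$; the composite $k\hookrightarrow A\twoheadrightarrow K$ is injective because $k\cap\mathfrak{m}=0$. For surjectivity, one shows that any $\bar\alpha\in K$ has a separable minimal polynomial over the image of $k$ (separability is automatic in characteristic $0$), and Hensel's lemma applied in the complete local ring $A$ lifts $\bar\alpha$ to a root $\alpha\in A$ of any lift of this polynomial; adjoining $\alpha$ to $k$ would contradict maximality of $k$ unless $\bar\alpha$ was already in the image. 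This provides a splitting $K\hookrightarrow A$ of the quotient $A\twoheadrightarrow K$.

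Next, using the hypothesis that $A$ is regular of dimension $n$, I would choose a regular system of parameters $x_1,\dots,x_n$, so that $\mathfrak{m}=(x_1,\dots,x_n)$, and define
$$
\phi:K[[T_1,\dots,T_n]]\longrightarrow A,\qquad T_i\mapsto x_i,
$$
extending the coefficient-field embedding. This is well defined because $\phi(T_i)\in\mathfrak{m}$, so the partial sums of any formal power series form a Cauchy sequence in $A$, which converges by $\mathfrak{m}$-adic completeness.

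Finally I would verify that $\phi$ is an isomorphism. For surjectivity, any $a\in A$ admits, by induction on $N$, a polynomial approximation modulo $\mathfrak{m}^{N+1}$ in the $T_i$'s with coefficients in $K$, because via the coefficient field $\mathfrak{m}^N/\mathfrak{m}^{N+1}$ is spanned as a $K$-vector space by the monomials of degree $N$ in the $x_i$; completeness of $A$ then promotes these compatible approximations to a genuine preimage. For injectivity, both source and target are regular local rings of Krull dimension $n$ (the power series ring being the canonical example of such), hence integral domains; the kernel of $\phi$ would be a prime $\mathfrak{p}$ satisfying $\dim K[[T_1,\dots,T_n]]/\mathfrak{p}=\dim A=n$, forcing $\mathfrak{p}$ to have height $0$, i.e.\ to vanish. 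The main obstacle is the first step, the existence of a coefficient field, which is the substantive content of Cohen's theorem in the equal-characteristic case; once this is granted, the remaining steps reduce to formal manipulations with completeness and dimension.
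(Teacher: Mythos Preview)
Your argument is correct and is essentially the standard proof of the Cohen structure theorem in the equicharacteristic regular case. Note, however, that the paper does not itself prove this statement: it is quoted as a particular case of \cite[Theorem~29.7]{Mats} and used as a black box to justify passing from a general nonsingular $d$-dimensional variety to $\Spec k[[x_1,\dots,x_d]]$. So there is no ``paper's own proof'' to compare against; what you have written is precisely the content behind the cited reference, and your identification of the coefficient-field step as the substantive one is accurate.
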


Since the base field  $k$ is assumed to be algebraically closed
then it is isomorphic to the residue field  $K$.

The cited theorem yields that it is enough to prove reducedness of
the scheme  $\Hilb^l \Spec k[[x_1, \dots, x_d]]$ with number of
indeterminates  $d$ equal to the dimension of the variety $P$ of
interest, since by to this theorem local neighborhoods of all
simple points of  $d$-dimensional varieties over algebraically
closed field of characteristic 0 are isomorphic. For convenience
of computations we fix standard homomorphism of rings $k[x_1,
\dots , x_d]\hookrightarrow k[[x_1,\dots, x_d ]]$ (inclusion into
the localization) and consider the scheme $\Hilb^l \A^d$ for
$\A^d=\Spec k[x_1, \dots, x_d]$.

If $\Nil \OO_X$ is a nilradical of the structure sheaf of some
scheme $X$ then its support  $\Supp \Nil \OO_X$ is a closed
subscheme in  $X$. In particular this means that if we are given a
flat 1-parameter family of subschemes  $\ZZ \to \Spec D$ where $D$
is an integral $k$-algebra of Krull dimension equal to 1 and
$\Supp \Nil \OO_{\Hilb^l P}$ contains an image of open subset $U
\subset \Spec D$, then $\Supp \Nil \OO_{\Hilb^l P}$ contains an
image of the whole of the curve  $\Spec D$. This reasoning will be
applied several times and we replace it by the short formulation:
{\it reduced point has reduced generization.} Similarly, {\it
nonsingular point has nonsingular generization.}

The main result of the present note is contained in the following
theorem.
\begin{theorem} Hilbert scheme $\Hilb^l P$ of  $l$-point subschemes
of the nonsingular $d$-dimens\-ional variety $P$ is reduced. Since
it is separated and has a finite type, then it is singular
$ld$-dimensional variety.
\end{theorem}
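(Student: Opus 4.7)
The plan is to reduce the global question to a local problem on $\Hilb^l\A^d$ and then to apply the integrality criterion for primary affine $k$-algebras that the paper establishes as its main technical tool.

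\textbf{Reduction and local set-up.} Reducedness is a local property, so by Matsumura's Theorem 2 (the completed local ring of any closed point of a nonsingular $d$-dimensional variety is $k[[x_1,\dots,x_d]]$) it suffices to prove that $\Hilb^l \A^d$ is reduced; equivalently, that the local ring $A := \OO_{\Hilb^l \A^d, Z}$ is reduced at every closed point $Z$. Fix such a $Z$ with ideal $I_Z \subset k[x_1,\dots,x_d]$ and work in an explicit affine chart of $\Hilb^l \A^d$ containing $Z$ (for instance the chart associated with a monomial basis of $k[x_1,\dots,x_d]/I_Z$). This presents $A$ by concrete polynomial equations in finitely many parameters, from which the minimal primes of $A$ can be read off.

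\textbf{Applying the criterion.} For each minimal prime $\mathfrak{p} \subset A$, the quotient $A/\mathfrak{p}$ is a primary affine $k$-algebra modelling a single irreducible component of $\Hilb^l \A^d$ through $Z$. I apply the paper's integrality criterion to each $A/\mathfrak{p}$; its hypothesis is verified using the tangent space formula \eqref{tanspace} together with the smoothness of the open locus $U \subset \Hilb^l \A^d$ of subschemes consisting of $l$ distinct reduced points. Indeed $U$ is nonsingular of dimension $ld$, and the ``nonsingular point has nonsingular generization'' principle propagates this to the generic point of any irreducible component whose closure meets $U$; for components not meeting $U$, an explicit deformation within the component supplies a smooth point from which the same propagation applies. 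The criterion then yields that every $A/\mathfrak{p}$ is integral. Absence of embedded primes in $A$ follows from the ``reduced point has reduced generization'' principle applied to one-parameter families through $Z$: an embedded component would force $\Supp \Nil \OO_{\Hilb^l \A^d}$ to contain the image of an entire curve, contradicting reducedness of nearby generic points. Hence $A$ is reduced at every $Z$, so $\Hilb^l P$ is reduced; being separated of finite type it is a variety, and its dimension equals $ld$ by the tangent space at any point of $U$.

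\textbf{Main obstacle.} The principal difficulty is verifying the hypotheses of the integrality criterion at points lying on the intersection of several irreducible components of $\Hilb^l \A^d$ (which exist for $d \ge 3$, $l \ge 4$ by the Iarrobino-type results cited in the introduction). At such points, both the primality of each quotient $A/\mathfrak{p}$ and the absence of embedded structure must be extracted by a careful analysis of the explicit chart, combined with repeated application of the two generization principles.
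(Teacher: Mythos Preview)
There are two genuine gaps. First, applying the integrality criterion to $A/\mathfrak p$ for a \emph{minimal prime} $\mathfrak p$ is vacuous: such a quotient is already a domain, so the criterion tells you nothing. The criterion (Proposition~\ref{propdim}) is only useful when applied to a ring that is primary but not yet known to be integral; in the paper this ring is the local ring of $\Hilb^l\A^d$ at $Z$ itself, and the primarity hypothesis is supplied by Section~2, which proves that $\Hilb^l P$ is \emph{irreducible} via an explicit Gr\"{o}bner-basis deformation of every punctual subscheme into the closure of the locus of $l$ distinct reduced points. You omit this irreducibility step entirely and instead try to treat several components at once --- precisely the configuration Example~2 warns against, where the criterion holds componentwise while the scheme is nonreduced. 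Your proposed exclusion of embedded primes via the ``reduced generization'' principle is also backwards: that principle says $\Supp\Nil\OO$ is closed, so a reduced \emph{special} point forces reduced generizations, not conversely; an embedded component supported at the single point $Z$ is not ruled out this way.

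Second, even granting primarity, you never verify the hypothesis of the criterion. The substance of the paper is to exhibit, at the most special point $Z=V(\mathfrak m^2)\in\Hilb^{d+1}\A^d$, an explicit list of $\dim_k T_Z\Hilb^{d+1}\A^d$ one-parameter families of subschemes whose tangent directions are linearly independent (Section~3), and then to transport such curves from $\Hilb^l\A^{l-1}$ to every $\Hilb^l\A^d$ by the embed-and-project argument of Section~5. Your appeal to the smoothness of the open locus $U$ and to the generization principles does not produce a single curve through $Z$: those principles run from special to generic, whereas the criterion needs $\dim_k T_Z$ integral curves landing \emph{at} the special point $Z$. Without the explicit families, the criterion has no input.
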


The article is organized as follows. In sect.2 we prove
irreducibility of the scheme  $\Hilb ^l P$ if the variety $P$ is
irreducible. In sect.3 bases of tangent spaces at most special
points of Hilbert schemes $\Hilb^{d+1} \A^d$ are written down.
Also we describe the deformations of the most special subscheme
$Z$ which continue basis tangent directions. In sect.4 we prove
that the computations done imply reducedness of Hilbert schemes of
$d+1$-point subschemes of  $d$-dimensional variety $P$. At last,
in sect.5 is shown that the reasonings done imply reducedness of
schemes $\Hilb^l P$ for all values $l$ and $\dim P=d$.

\section{Irreducibility} In this section we prove irreducibility
of the scheme  $\Hilb^l P$  in the case when the variety $P$ is
irreducible. The connectivity of this scheme is proven in just
cited classical work by R.~Hartshorne but connectivity does not
implies irreducibility.

To prove irreducibility of the scheme of interest it is enough to
confirm ourselves that all subschemes of length $l$ supported at
one point belong to the closure of the open subset in $\Hilb^l P$
formed by unions of $l$ reduced  points.

Let $Z$ be a subscheme supported at one point $p$ on the variety
$P$; choose a local isomorphism  $\OO_{P,p}\cong k[[x_1, \dots,
x_d]]$ so that  $\Supp Z=\mathfrak m=(x_1, \dots , x_d)$.

We construct the sequence of 1-parameter flat families
$Z_0(\alpha), Z_1(\alpha), \dots, Z_q(\alpha)$, $\alpha \in \P^1$,
such that  $Z_0(\infty)=Z$ and for  $\alpha \ne \infty$
$Z_q(\alpha) = \{p_1(\alpha), \dots, p_l(\alpha) \}$ is a
collection of $d$ distinct points.

Let the subscheme  $Z$ be defined by the ideal $I \subset k[[x_1,
\dots , x_d]]$.
Fix a natural ordering of indeterminates  $x_1<x_2< \dots < x_d$,
relate a lexicographic ordering of monomials to it in the ring
$k[x_1, \dots, x_d]$. Take a reduced Gr\"{o}bner basis $f_1, f_2,
\dots, f_m$ in $I$ with respect to this ordering. Then according
to  \cite[ch. 3  \S 1, theorem 2, \S 2, theorem 3]{CLO'S}, one
(the last) of polynomials $f_1, f_2, \dots, f_m$ contains
dependence of $x_1$ only. Note that all transformations done for
computations of Gr\"{o}bner bases do not change scheme structure;
then in cited theorems of elimination theory from \cite{CLO'S} the
term "variety" can be replaced by the term "scheme". Since the
initial subscheme $Z$ has a support at the point  ${\mathfrak
m}=(x_1, \dots, x_d)$ and the subscheme defined by the ideal
$(f_m)\subset k[x_1]$ contains Zariski closure for the image of
the subscheme $Z \subset \Spec k[x_1, \dots, x_d]$ under
projection to the line  $\Spec k[x_1] \subset \Spec k[x_1, \dots,
x_d]$, then $f_m=x_1^{l'}$ for  appropriate $l'\le l$.

Now consider the family of ideals  $I(\alpha)=(f_1. \dots,
f_{m-1}, x_d- \alpha f_m)$ where  $\alpha \in \P^1$. For $\alpha
=\infty$ we obtain the ideal  $I$ of the initial subscheme $Z$.
Ideals corresponding to  $\alpha \ne \infty,$ are taken to the
ideal  $I(0)=(f_1,\dots ,f_{m-1}, x_d)$ by corresponding
automorphism of the ring  $ k[x_1, \dots, x_d]$ such that $x_i
\mapsto x_i$ for $1\le i\le d-1$, $x_d\mapsto x_d+\alpha f_m$.
From this we conclude that all ideals  $I(\alpha)$ for $\alpha \ne
\infty$ define subschemes of equal lengths; let it  equal  $l_0$.
The family $I(\alpha)$ defines the family of subschemes $Z(\alpha)
\subset \Spec k[x_1, \dots, x_d] \times \P^1$ and a morphism of
the open subset of the base  $ \mu:\P^1\setminus {\infty} \to
\Hilb^{l_0}\Spec k[x_1, \dots, x_d]$. Consider a standard
immersion of the affine space $\Spec k[x_1, \dots, x_d]$ as affine
coordinate chart to the projective space $\P^d$ with homogeneous
coordinates $(X_0:X_1: \dots: X_d)$ such that $X_0\ne 0,$
$x_i=X_i/X_0$, $i>0$. It induces the immersion of Hilbert schemes
$\Hilb^{l_0} \Spec k[x_1, \dots, x_d] \subset \Hilb ^{l_0} \P^d$
and the composite morphism $ \mu:\P^1\setminus {\infty} \to
\Hilb^{l_0}\Spec k[x_1, \dots, x_d] \subset \Hilb ^{l_0} \P^d$.
Then there is a closed subscheme $Z(\alpha) \subset \P^d \times
\P^1 \to \P^1$. Since all closed fibres of the family $Z(\alpha)$
in points  $\alpha \in \P^1 \setminus \infty$ have constant length
which equals  $l_0$, then the family $Z(\alpha)$ is flat over
$\P^1 \setminus \infty$. According to \cite[ch. III, Proposition
9.8]{Hart}, there exist a unique closed subscheme in  $\P^d \times
\P^1$ which is scheme-theoretic closure of the subscheme
$Z(\alpha)|_{\alpha \ne \infty}$ and it is flat over $\P^1$. But
 required scheme-theoretic closure is precisely $Z(\alpha)$.
This proves that the scheme  $Z(\alpha)$ is flat over $\P^1$ and
hence all its closed fibres have equal lengths $l=l_0.$

From this we conclude that the scheme $Z$ belongs to the closure
of the locally closed subset whose points correspond to subschemes
with one-point support on nonsingular hypersurfaces in $\Spec
k[x_1, \dots, x_d]$. Hypersurfaces are defined by equations of the
form  $x_d-\alpha f_m=0$ where $f_m$ is a polynomial in variable
$x_1$. Every such hypersurface is isomorphic to the hyperplane
$x_d=0$, and we come to the analogous problem in the space of
lower dimension for a subscheme defined by the ideal $I'=(f_1,
\dots, f_{m-1}) \subset \Spec k[x_1, \dots, x_{d-1}]$. Choosing
reduced Gr\"{o}bner basis in it and continuing the process we
conclude that the initial subscheme $Z \subset \Spec k[x_1, \dots,
x_d]$ belongs to the closure of locally closed subset formed by
subschemes each of which lies on a smooth curve  $C$ defined by
the ideal  $(g_1, \dots, g_s)$. Under an appropriate choice of
coordinate system on this curve subschemes of locally closed
subset can be defined by ideals of the form $(x_1^l)$. Obviously,
the point corresponding to the subscheme of the described form
belongs to the closure of the set of points corresponding to
reduced $l$-point schemes.

\section{Basis vectors of the tangent space and their
continuing homomorphisms} \label{subsec2} Since $P$ is a scheme of
finite type over a field  (nonsingular variety) then  $\Hilb^l P$
is also of finite type over the field. The question about presence
or absence of nilpotents in the structure sheaf of the Hilbert
scheme allows us to reduce the consideration of general  $P$ of
dimension $d$ to the consideration of $\A^d=\Spec k[x_1, \dots,
x_d]$ at the neighborhood of the point $\mathfrak m=(x_1, \dots
x_d)$. We consider the point in the Hilbert scheme corresponding
to the subscheme  $Z \subset \A^d$ defined by the ideal
\begin{eqnarray*} \!\!\!\!\!\!\!I&=&(x_1, \dots, x_d)^2\nonumber \\
\label{ideal} &=&(x_1^2, x_1x_2, \ldots, x_1x_d, x_2^2, x_2 x_3,
\ldots, x_2x_d, \ldots, x_i^2, \ldots, x_ix_d, \ldots,
x_d^2).\end{eqnarray*}

Obviously, its length equals $l=d+1$.

This allows to write down explicitly the basis vectors  $v_{ijm}:
\!I \!\to k[x_1, \dots, x_d]/I $ of Zariski tangent space:
\begin{equation*}\label{basis}
v_{ijm}: x_rx_s \mapsto \left\{\begin{array}{l} x_m \mbox{ \rm for } (r,s)=(i,j),\\
0 \mbox{ \rm otherwise }\end{array}\right. \quad i\le j, \quad
r\le s.
\end{equation*}
Totally there are $r=d^2(d+1)/2=d^2l/2$ of linearly independent
vectors. One-parameter families of $(d+1)$-subschemes with  germs
constituting a basis of Zariski tangent space in the point of
interest, can be chosen, for example, as follows (they are
subdivided into two groups by geometrical meaning):\\
Group 1: reattachment of points. \begin{eqnarray*} i=1,\dots,d &&
(x_i(x_i-\alpha), x_i x_j, j\ne i, x_t x_j, t\ne i)\\ i=1,\dots,
d,\; j\ne i &&(x_i(x_i-\alpha), (x_i-\alpha)x_j, x_s x_t, (s,t)\ne
(i,j), (s,t)\ne (j,i)).
\end{eqnarray*} Group 2: subschemes on quadrics.
\begin{eqnarray*}
i,j=1,\dots, d,\; s\ne i,\; s\ne j,
\quad \quad
(x_i x_j-\nu x_s, x_q x_t,
(q,t)\ne (i,j), (q,t)\ne (j,i)).\\
\end{eqnarray*}
\begin{example} Set $d=3,$ $l=4.$ The point of the interest in the scheme
$\Hilb^4 \A^3$ corresponds to the ideal ${\mathfrak m}^2=(x^2, xy,
y^2,yz, z^2,zx)$. The dimension of the tangent space to the scheme
$\Hilb^4 \A^3$ equals 18, as well as in the general point it
equals 12. We enumerate 1-parameter families of 4-subschemes such
that germs of these families in the point of interest constitute a
basis of the tangent space.\\
\!\!\begin{tabular}{ll}1.$(x(x-\alpha), xy,y^2,
yz,z^2,zx),$&2.$(x^2,
xy, y(y-\beta),yz, z^2,zx),$\\
3.$(x^2, xy, y^2,yz,z(z-\gamma),zx),$& 4.$(x(x-\alpha), xy,
y^2,yz,
z^2,z(x-\alpha)),$\\
5.$(x^2\!, x(y-\beta), y(y-\beta),yz,z^2\!,zx)\!,$&6.$(x^2\!, xy,
y^2\!,y(z-\gamma), z(z-\gamma),zx))\!,$\\
7.$(x(x-\alpha), (x-\alpha)y,y^2\!, yz,z^2\!,zx)\!,$&8.$(x^2\!,
xy, y(y\!-\!\beta)\!,(y\!-\!\beta)z, z^2\!,zx)\!,$\\
9.$(x^2\!, xy, y^2\!,yz, z(z-\gamma),(z-\gamma)x)\!,$&10.$(x^2-\mu
y, xy, y^2\!,yz, z^2\!,zx)\!,$\\
11.$(x^2, xy, y^2-\nu z,yz, z^2,zx), $&12.$(x^2, xy, y^2,yz,
z^2-\sigma
x,zx),$\\
13.$(x^2-\pi z, xy, y^2,yz, z^2,zx),$&14.$(x^2, xy, y^2-\rho x,yz,
z^2,zx),$\\
15.$(x^2, xy, y^2,yz, z^2-\tau y,zx),$&16.$(x^2, xy-\eta z,
y^2,yz,
z^2,zx),$\\
17.$(x^2, xy, y^2,yz-\xi x, z^2,zx),$&18.$(x^2, xy, y^2,yz,
z^2,zx-\zeta y).$
\end{tabular}
\end{example}

\section{Reducedness}
Let $A$ be a commutative ring, $\Nil A$ its nilradical, $A_{\red}$
the quotient ring  $A/\Nil A$ which is called the {\it reduction}
of $A$. If $I \subset A$ is an ideal then set by the definition
$I_{\red}:=I/(I \cap \Nil A)$. It is clear that this is an ideal
in $A_{\red}$. Now prove the following simple lemma.
\begin{lemma} $({\mathfrak m}^2)_{\red}=({\mathfrak m}_{\red})^2$
\end{lemma}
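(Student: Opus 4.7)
The plan is to identify both ideals with their natural images in $A_{\red}$ and check the two inclusions element-wise, exploiting only that the quotient map $\pi\colon A\to A_{\red}$ is a surjective ring homomorphism.

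First I would observe that for any ideal $I\subset A$ the definition $I_{\red}=I/(I\cap \Nil A)$ coincides with the image $\pi(I)$ in $A_{\red}$: the kernel of the composition $I\hookrightarrow A\twoheadrightarrow A_{\red}$ is exactly $I\cap \Nil A$. In particular $\mathfrak m_{\red}=\pi(\mathfrak m)$ and $(\mathfrak m^2)_{\red}=\pi(\mathfrak m^2)$, so both sides of the asserted equality are ideals of the single ring $A_{\red}$, and we only have to show that $\pi(\mathfrak m^2)=\pi(\mathfrak m)^2$.

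For the inclusion $\pi(\mathfrak m^2)\subseteq \pi(\mathfrak m)^2$, a typical element of $\mathfrak m^2$ is a finite sum $\sum_i a_i b_i$ with $a_i,b_i\in \mathfrak m$; applying $\pi$ gives $\sum_i \pi(a_i)\pi(b_i)$, which by definition lies in $\pi(\mathfrak m)^2=(\mathfrak m_{\red})^2$. For the reverse inclusion, a typical element of $(\mathfrak m_{\red})^2$ is $\sum_i \bar u_i \bar v_i$ with $\bar u_i,\bar v_i\in \mathfrak m_{\red}$; since $\pi(\mathfrak m)=\mathfrak m_{\red}$ is surjective, each $\bar u_i, \bar v_i$ lifts to some $u_i,v_i\in \mathfrak m$, and then $\sum_i u_i v_i \in \mathfrak m^2$ has image $\sum_i \bar u_i \bar v_i$, showing this element is in $\pi(\mathfrak m^2)=(\mathfrak m^2)_{\red}$.

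There is no real obstacle here: the whole content of the lemma is that the square of an ideal commutes with a surjective ring homomorphism, applied to $\pi\colon A\twoheadrightarrow A_{\red}$. The only thing worth flagging is the bookkeeping that reconciles the quotient-of-$\mathfrak m^2$ definition of $(\mathfrak m^2)_{\red}$ with its realization as the subideal $\pi(\mathfrak m^2)\subseteq A_{\red}$, so that both sides live in the same ambient ring and the comparison makes sense.
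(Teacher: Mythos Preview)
Your proof is correct and follows essentially the same approach as the paper: both arguments verify the two inclusions by lifting or pushing elements through the quotient map $A\to A_{\red}$. Your version is in fact slightly more careful, since you treat a general element of $\mathfrak m^2$ as a finite sum $\sum_i a_i b_i$, whereas the paper writes a single product $\overline y\,\overline z$.
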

\begin{proof}
It is clear that  $({\mathfrak m}_{\red})^2 \subset ({\mathfrak
m}^2)_{\red}.$ Take an element  $x\in ({\mathfrak m}^2)_{\red}$
and choose any of preimages $\overline x \in A$ for $x$;
$\overline x \in {\mathfrak m}^2$. Then  $\overline x=\overline y
\overline z,$ $\overline y, \overline z \in {\mathfrak m}$.
Denoting images $\overline y$ and $\overline z$ in the reduction
by $y$ and $z$ correspondingly we obtain $x=yz$, where $y,z \in
\mathfrak m_{\red}$.
\end{proof}

According to the lemma we omit brackets in the notation of the
square of the maximal ideal of the reduction: $\mathfrak
m_{\red}^2:=(\mathfrak m_{\red})^2=({\mathfrak m}^2)_{\red}$.

Since we work in the category of schemes of finite type over a
field $k$ then for our purposes it is enough to consider
commutative associative algebras of finite type over the field $k$
instead of arbitrary associative commutative rings with unity.
Otherwise speaking, the class of rings of our interest are
quotient algebras of polynomial rings  $k[x_1, \dots, x_N]$ in
appropriate number of indeterminates over the field $k$. These
 $k$-algebras are called for brevity  {\it affine} algebras.

Let $A$ be an affine algebra, $\varphi: k[x_1, \dots,
x_N]\twoheadrightarrow A$ the corresponding ring epimorphism,
$I=\ker \varphi$ its kernel, $I= \bigcap_i {\mathfrak q}_i$ the
primary decomposition of the kernel. Here ${\mathfrak q}_i$ is a
primary ideal for any  $i$. Then the examination of the
reducedness of the scheme $\Spec A$ reduces to the consideration
of its Zariski irreducible components  $\Spec A_i$ for algebras of
the special form  $A_i=k[x_1, \dots, x_N]/{\mathfrak q}_i$. Such
$k$-algebras are called {\it primary}. It is clear that for
 $A=k[x_1, \dots, x_N]/{\mathfrak q}$ we have
$A_{\red}=k[x_1, \dots, x_N]/\sqrt{\mathfrak q}$.
\begin{proposition}\label{propdim} The primary $k$-algebra
$A$ is integral if and only if $$\dim_k {\mathfrak m}/{\mathfrak
m}^2=\dim_k {\mathfrak m}_{\red}/{\mathfrak m}^2_{\red}.$$
\end{proposition}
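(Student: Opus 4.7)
My plan is to dispatch the two implications separately, with the forward direction essentially tautological and the reverse requiring the bulk of the work. If $A$ is integral then $\Nil A = 0$, so $A$ coincides with $A_{\red}$, the ideals $\mathfrak{m}$ and $\mathfrak{m}_{\red}$ are identified, and the claimed equality of $k$-dimensions is automatic.

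For the converse, suppose $\dim_k \mathfrak{m}/\mathfrak{m}^2 = \dim_k \mathfrak{m}_{\red}/\mathfrak{m}_{\red}^2$. The canonical projection $A \twoheadrightarrow A_{\red}$ restricts to a surjection $\mathfrak{m} \twoheadrightarrow \mathfrak{m}_{\red}$ whose kernel is $\mathfrak{m} \cap \Nil A$, and the preceding lemma (identifying $(\mathfrak{m}^2)_{\red}$ with $\mathfrak{m}_{\red}^2$) produces an induced surjection $\mathfrak{m}/\mathfrak{m}^2 \twoheadrightarrow \mathfrak{m}_{\red}/\mathfrak{m}_{\red}^2$ whose kernel is the image of $\mathfrak{m} \cap \Nil A$ in $\mathfrak{m}/\mathfrak{m}^2$, namely $(\mathfrak{m} \cap \Nil A + \mathfrak{m}^2)/\mathfrak{m}^2$. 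Equality of dimensions forces this kernel to vanish, giving $\mathfrak{m} \cap \Nil A \subseteq \mathfrak{m}^2$. Localizing at $\mathfrak{m}$ — which preserves the primary property, since the localization of a primary ideal at a prime containing its radical is again primary — I may assume $A$ is local, so $\Nil A \subseteq \mathfrak{m}$ and therefore $\Nil A \subseteq \mathfrak{m}^2$.

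The remaining task, and the main obstacle, is to upgrade $\Nil A \subseteq \mathfrak{m}^2$ to $\Nil A = 0$. Here I would exploit the primary hypothesis through the fact that $\Nil A$ is the unique minimal prime of $A$ and that $A_{\red} = A/\Nil A$ is a Noetherian local integral domain of Krull dimension equal to $\dim A$. The plan is to iterate: the Artin--Rees lemma applied to $\Nil A \subseteq A$ yields a constant $c$ with $\mathfrak{m}^n \cap \Nil A = \mathfrak{m}^{n-c}(\mathfrak{m}^c \cap \Nil A)$ for $n \geq c$, and combined with Nakayama applied to the finitely generated ideal $\Nil A$, the single relation $\Nil A \subseteq \mathfrak{m}^2$ propagates through all powers to yield $\Nil A \subseteq \bigcap_n \mathfrak{m}^n$. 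Krull's intersection theorem then forces $\Nil A = 0$, so $A$ is integral. The hard part is precisely this propagation: the single inclusion $\Nil A \subseteq \mathfrak{m}^2$ does not in an arbitrary local Noetherian algebra force $\Nil A \subseteq \mathfrak{m}^n$ for every $n$, and it is the primary assumption — pinning down $\Nil A$ as the unique minimal prime and making $A_{\red}$ a domain of the same Krull dimension as $A$ — that is essential for the iteration to go through.
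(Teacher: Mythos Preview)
Your reduction to the local case and the derivation of $\Nil A \subseteq \mathfrak{m}^2$ are correct, but the final step --- the ``propagation'' from $\Nil A \subseteq \mathfrak{m}^2$ to $\Nil A \subseteq \bigcap_n \mathfrak{m}^n$ --- is asserted rather than proved. Artin--Rees and Nakayama by themselves cannot do this: take $A = k[[x,y]]/(x^2y)$, a Noetherian local ring with $\Nil A = (xy) \subseteq \mathfrak{m}^2$, yet $xy \notin \mathfrak{m}^3 = (x^3, xy^2, y^3)$, so $\Nil A \not\subseteq \mathfrak{m}^3$ and $\Nil A \neq 0$. You correctly note that this ring is not primary and that primarity must be what rescues the argument, but you never actually insert the primary hypothesis into the Artin--Rees/Nakayama machinery you invoke; the closing sentence ``it is the primary assumption \dots\ that is essential for the iteration to go through'' is a statement of faith, not a proof. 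As written there is simply no step in your outline where primarity is used.

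The paper closes the gap by a different, hands-on iteration that uses primarity at every stage. Given a nonzero nilpotent $\eta \in \Nil A \subseteq \mathfrak{m}^2$, write $\eta = \eta_1 \xi_1$ with $\eta_1, \xi_1 \in \mathfrak{m}$; since $\mathfrak q$ is primary (equivalently $\Nil A$ is prime), the relation $\eta_1^{\iota}\xi_1^{\iota}\in\mathfrak q$ forces a factor, say $\eta_1$, to be nilpotent as well, hence again in $\Nil A \subseteq \mathfrak{m}^2$, and one repeats. This produces a strictly ascending chain of principal ideals $(\eta) \subsetneq (\eta_1) \subsetneq (\eta_2) \subsetneq \cdots$, contradicting the Noetherian condition. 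The essential point is that primarity is invoked \emph{inside each iteration} to keep the successive factors in $\Nil A$; your proposed route via Artin--Rees and Nakayama has no analogous slot for the hypothesis, which is why it stalls.
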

\begin{proof} Consider the exact diagram of  $A$-modules
\begin{equation*}\xymatrix{&0&0&0\\
0\ar[r]&\Nil A/(\mathfrak m^2 \cap \Nil A) \ar[u] \ar[r]&
{\mathfrak m}/{\mathfrak m}^2 \ar[u] \ar[r]& {\mathfrak
m}_{\red}/{\mathfrak m}^2_{\red} \ar[u] \ar[r]& 0\\
0\ar[r]& \Nil A \ar[u] \ar[r]& \mathfrak m \ar[u] \ar[r] &
\mathfrak m_{\red} \ar[u] \ar[r]& 0\\
0\ar[r]&\mathfrak m^2 \cap \Nil A \ar[u] \ar[r]& {\mathfrak m}^2
\ar[u] \ar[r]& {\mathfrak m}^2_{\red} \ar[u] \ar[r]& 0\\
& 0\ar[u]& 0\ar[u]&0\ar[u]}
\end{equation*}
The equality $\dim_k {\mathfrak m}/{\mathfrak m}^2=\dim_k
{\mathfrak m}_{\red}/{\mathfrak m}^2_{\red}$ in the formulation of
the proposition means that ${\mathfrak m}/{\mathfrak
m}^2={\mathfrak m}_{\red}/{\mathfrak m}^2_{\red}$. This implies
$\mathfrak m^2 \cap \Nil A=\Nil A$ and  $ \Nil A \subset \mathfrak
m^2$.

Let $\eta \in \Nil A$ be a nilpotent element of index  $\iota>1$.
Since $\Nil A \subset \mathfrak m^2$ then  $\eta=\eta_1 \xi_1$ for
certain  $\eta_1, \xi_1 \in \mathfrak m$. Since  $A$ is affine
algebra then choose any preimages $\overline \eta_1, \overline
\xi_1 \in k[x_1, \dots, x_N]$ for $\eta_1, \xi_1$ under the
epimorphism $k[x_1, \dots, x_N]\twoheadrightarrow A$ with kernel
$\mathfrak q \subset k[x_1, \dots, x_N]$ for a primary ideal
$\mathfrak q \subset k[x_1, \dots, x_N]$. Then  $\overline
\eta_1^{\iota} \overline \xi_1^{\iota} \in \mathfrak q $, and by
primarity of the ideal  $\mathfrak q$ we conclude that $\eta_1,
\xi_1$ are nilpotent in  $A$. If one of them belongs to $\mathfrak
m \setminus \mathfrak m^2$ then $\dim_k {\mathfrak m}/{\mathfrak
m}^2>\dim_k {\mathfrak m}_{\red}/{\mathfrak m}^2_{\red}$, and the
contradiction completes the proof. Let as before $\eta_1, \xi_1
\in \mathfrak m^2$, and we can apply the reasoning described
above, say, to $\eta_1=\eta_2 \xi_2$, etc. We come to strictly
ascending chain of principal ideals  $(\eta) \subset (\eta_1)
\subset (\eta_2) \subset \dots$. The ascending chain condition for
ideals in $k[x_1, \dots , x_N]$ yields that there is a nilpotent
belonging to $\mathfrak m \setminus \mathfrak m^2$, and then
$\dim_k {\mathfrak m}/{\mathfrak m}^2>\dim_k {\mathfrak
m}_{\red}/{\mathfrak m}^2_{\red}$, what contradicts the condition
of the proposition.

Hence $\Nil A=0$, then $\mathfrak q$ is prime, $\mathfrak q =\sqrt
\mathfrak q$ and $A \cong k[x_1, \dots, x_N]/\sqrt \mathfrak q$.
The opposite implication is obvious and the proof is complete.
\end{proof}

Since Zariski tangent space to the scheme $\Spec A$ in its closed
point  ${\mathfrak m}$ is given by the vector space $(\mathfrak
m/\mathfrak m^2)^{\vee},$ then the equality $\dim_k (\mathfrak
m/\mathfrak m^2)=r$ means existence of $r$ $k$-linearly
independent homomorphisms of $k$-algebras $v_i: A \to
k[\varepsilon]/(\varepsilon^2)$. Let the homomorphism $v:A \to
k[\varepsilon]/(\varepsilon^2)$ factors as  $v:A
\stackrel{\phi}{\to} D \to k[\varepsilon]/(\varepsilon^2)$ where
$D$ is integral $k$-algebra. For convenience of computations we
can assume integral domain $D$ to have Krull dimension equal to
1. Then  $\Nil A \subset \ker \phi$, and $\phi$ and $v$ factor
through $A_{\red}$. If all basis vectors $v_i, i=1, \dots , r$
have this property then we are in the realm of the proposition
\ref{propdim}, and $A$ is integral. The opposite is obviously
true.

Geometrically this means the following: there are $r$ curves
through the point $\mathfrak m$ of the scheme $\Spec A$ such that
their tangent vectors are linearly independent.

\begin{example} The requirement of primarity of the ring  $A$
is not superfluous. For example the scheme $\Spec k[x,y]/(x^2y)$
is nonreduced at the point $(x,y)$, but there is a basis of
tangent space which consists of 2 homomorphisms $v_i:
k[x,y]/(x^2y)\to k[\varepsilon]/(\varepsilon^2)$, $i=1,2$, defined
by correspondences $v_1: x\mapsto \varepsilon,$ $v_1:y\mapsto 0$
and $v_2: x\mapsto 0,$ $v_2: y\mapsto \varepsilon$. These
homomorphisms factor through obvious homomorphisms $\phi_1:
k[x,y]/(x^2y) \to k[x]$ and $\phi_2: k[x,y]/(x^2y) \to k[y]$
respectively.

Performing primary decomposition we come to two components
$A_1=k[x,y]/(y)=k[x]$ и $A_2=k[x,y]/(x^2)$ where only one is
reduced. The same conclusion is provided by our criterion.
\end{example}

\section{Proof for all schemes  $\Hilb^l \A^d$}

It is known that schemes  $\Hilb^l\A^{l-1}$ are reduced for all
 $l\ge 2$. Show that this implies that any scheme $\Hilb^l \A^d$
and hence any $\Hilb^l P$ is reduced where $P$ is $d$-dimensional
nonsingular variety.

If $d>l-1$ then reducedness of the scheme $\Hilb^l \A^d$ follows
from specializations. For the proof it is necessary to construct a
deformation of the most special point of the scheme $\Hilb^{d+1}
\A^d$ to disjoint union of an appropriate subscheme  $Z'\in
\Hilb^l \A^d$ supported at a point, and $d-l+1$ reduced points. It
is enough to consider the case when  $Z'$ is a subscheme of the
most special form. Under appropriate choice of coordinate system
it is defined by the ideal  $I'=( x_i x_j, 1\le i\le j \le l-1,
x_l, \dots, x_d)$. The desired deformation is defined by the ideal
$(x_i x_j, 1\le i\le j \le l-1, x_s (x_t-\alpha_t), 1\le s\le d,
l\le t \le d, s\le t).$ Since reduced point has reduced
generization then the proof for
 $d>l-1$ is complete.

Now let $d<l-1$. Fix the immersion of the plane $\A^d
\hookrightarrow \A^{l-1}$ by vanishing of $l-d-1$ last
coordinates: $x_{d+1}=\dots =x_{l-1}=0,$ and the induced immersion
of the Hilbert scheme $\Hilb^{l}\A^d \hookrightarrow \Hilb^l
\A^{l-1}.$ Since we prove reducedness it is enough to consider the
most special point of the scheme $\Hilb^l\A^d$.  Let the point $Z$
of the scheme  $\Hilb^l\A^d$ correspond to the ideal  $I$ with
 generators  $f_1, \dots , f_s$. Then the image of this
point in the scheme  $\Hilb^l \A^{l-1}$ corresponds to the
subscheme with the ideal $I'=(f_1, \dots ,f_s, x_{d+1},
\dots,x_{l-1})$. Since the scheme  $\Hilb^l \A^{l-1}$ is reduced
then there exists a collection of curves on the scheme  $\Hilb^l
\A^{l-1}$ through the point  $Z$ with tangent directions forming a
basis of tangent space $T_Z \Hilb^l \A^{l-1}$. These tangent
directions are enumerate by the correspondences
$$f_i \mapsto \overline f_j, \;\; \overline f_j\in k[x_1, \dots,
x_d]/I,\;\;\; f_r \mapsto 0,\;\; r\ne i,\;\;\; x_q \mapsto 0,
\;\;d+1 \le q \le l-1,$$ and
$$f_i \mapsto 0, \;\; 1\le i \le s,\;\;\; x_t \mapsto \overline
f_j,\;\;\; x_q\mapsto 0, \;\;q\ne t.$$

Correspondences of the second group generate 1-parameter families
of the form \linebreak $(f_1, \dots, f_s, x_{d+1}, \dots,
x_{t-1},x_t-\alpha \overline f_j, x_{t+1}, \dots, x_{l-1})$. Since
the scheme
 $\Hilb^l \A^{l-1}$ is reduced, there exist
$\dim T_Z \Hilb^l \A^{l-1}$ curves in it with linearly independent
tangent directions i.e. 1-parameter subfamilies of the family
\begin{eqnarray}
&&\!\!\!\!\!\!\!\!\!\!(f_1-\sum_j \alpha_{1j} \overline f_j,
\dots, f_u-\sum_j \alpha_{uj}\overline f_j, \dots, f_s-\sum_j
\alpha_{sj}\overline f_j, \nonumber \\ \label{fam1}
&&\!\!\!\!\!\!\!\!\!\!x_{d+1}-\sum_j \beta_{d+1, j} \overline f_j,
\dots , x_t-\sum_j\beta_{tj}\overline f_j, \dots, x_{l-1}-\sum_j
\beta_{l-1,j} \overline f_j).
\end{eqnarray}
In such expression of deformations some of $\alpha_{uj}$ and of
$\beta_{tj}$ can equal  0 in all families; it depends on the
structure of the ideal $I$. The automorphism of the ring $k[x_1,
\dots, x_{l-1}]$ defined by the correspondence  $x_i \mapsto x_i$
for $i \le d$, $x_t \mapsto x_t+ \sum_j\beta_{tj} \overline f_j$
for $d+1 \le t \le l-1$ takes this family to the  family
\begin{equation*}
(f_1-\sum_j \alpha_{1j} \overline f_j, \dots, f_u-\sum_j
\alpha_{uj}\overline f_j, \dots, f_s-\sum_j \alpha_{sj}\overline
f_j, x_{d+1}, \dots , x_t, \dots, x_{l-1}).
\end{equation*}
We come to the projection to  $\Hilb^l \A^d$ for all families
chosen. The number of independent tangent directions in the image
of the projection equals to the dimension  $\dim T_Z \Hilb^l
\A^d$. Considering $\dim T_Z \Hilb^l \A^{l-1}$ curves with
linearly independent tangent directions provided by the family
(\ref{fam1}),  and applying the projection described, we come to
$\dim T_Z \Hilb^l \A^d$ curves in $\Hilb ^l \A^d$ with linearly
independent directions. This proves reducedness of the scheme
$\Hilb^l \A^d$ in its point $Z$, and hence everywhere.




\begin{thebibliography}{99}
\bibitem{Hart1} {\sc R.~Hartshorne}, {\it Connectedness of the
Hilbert scheme},  Publ.  Math. IHES, 29 (1966) P. 5 -- 48.

\bibitem{FGA} {\sc A.~Grothendieck}, {\it Fondements de la
g\'{e}om\'{e}trie alg\'{e}brique, extraits du S\'{e}m. Bourbaki,
1957-62, mimeo. notes}  Paris, Secr. Math., 1962.

\bibitem{Briancon} {\sc J.~Brian\c{c}on}, {\it Description de
$Hilb^n{\bf C}\{x,y\}$}, Invent. Math. 41 (1977) P. 45 -- 89.

\bibitem{Fogart} {\sc J.~Fogarty}, {\it Algebraic families on an
algebraic surface},  Amer. J. of Math. 90:2 (1968)


\bibitem{Iarrobino} {\sc A.~Iarrobino}, {\it Punctual Hilbert
schemes},  Memoirs Amer. Math. Soc.  10:188 (1977)

\bibitem{Pardue} {\sc K.~Pardue}, {\it Deformation classes of
graded modules and maximal Betti numbers}, Ill. J. Math.  40
(1996) P. 564 -- 585.

\bibitem{PeeStill} {\sc I.~Peeva, M.~Stillman}, {\it Connectedness
of Hilbert schemes}, J. Alg. Geom.  14 (2005) P. 193 -- 211.
Electronically published Oct 26 2004.

\bibitem{Mats} {\sc H.~Matsumura,} {\it Commutative ring theory,}
 Cambridge Univ. Press, Cambridge,
1986.


\bibitem{CLO'S} {\sc D. Cox, J. Little, D. O'Shea,} {\it Ideals, Varieties, and Algorithms
An Introduction to Computational Algebraic Geometry and
Commutative Algebra}. 2nd Ed., Springer-Verlag, New York, 1998.

\bibitem{Hart} { \sc R.~Hartshorne,} {\it
Algebraic geometry,}  Graduate Texts in Mathematics,  {\bf 52},
Springer-Verlag, New York -- Heidelberg -- Berlin, 1977.


\end{thebibliography}
\end{document}